\newtheorem{theorem}{Theorem}
\newtheorem{definition}[theorem]{Definition}
\newtheorem{example}[theorem]{Example}
\newtheorem{lemma}[theorem]{Lemma}
\newtheorem{proposition}[theorem]{Proposition}
\newtheorem{remark}[theorem]{Remark}
\newenvironment{proof}[1][Proof]{\noindent\textbf{#1.} }{\ \rule{0.5em}{0.5em}}
\begin{document}

\title{A Combinatorial Discussion on Finite Dimensional Leavitt Path Algebras%
}
\author{A. Ko\c{c}$^{(1)}$, S. Esin$^{(2)}$, \.{I}. G\"{u}lo\u{g}lu$^{(2)}$,
M. Kanuni$^{(3)}$ \\
$^{(1)}$ \.{I}stanbul K\"{u}lt\"{u}r University\\
Department of Mathematics and Computer Sciences\\
$^{(2)}$ Do\u{g}u\c{s} University\\
Department of Mathematics\\
$^{(3)}$ Bo\u{g}azi\c{c}i University\\
Department of Mathematics}
\maketitle

\begin{abstract}
Any finite dimensional semisimple algebra A over a field K is isomorphic to
a direct sum of finite dimensional full matrix rings over suitable division
rings. In this paper we will consider the special case where all division
rings are exactly the field K. All such finite dimensional semisimple
algebras arise as a finite dimensional Leavitt path algebra. For this
specific finite dimensional semisimple algebra $A$ over a field $K,$ we
define a uniquely detemined specific graph - which we name as a truncated
tree associated with $A$ - whose Leavitt path algebra is isomorphic to $A$.
We define an algebraic invariant $\kappa (A)$ for $A\ $and count the number
of isomorphism classes of Leavitt path algebras with $\kappa (A)=n.$

Moreover, we find the maximum and the minimum $K$-dimensions of the Leavitt
path algebras of possible trees with a given number of vertices and
determine the number of distinct Leavitt path algebras of a line graph with
a given number of vertices.
\end{abstract}

\textit{Keywords:} \textit{Finite dimensional semisimple algebra, Leavitt
path algebra, Truncated trees, Line graphs.}

\section{Introduction}

By the well-known Wedderburn-Artin Theorem \ \cite{2}, any finite
dimensional semisimple algebra $A$ over a field $K$ is isomorphic to a
direct sum of finite dimensional full matrix rings over suitable division
rings. In this paper we will consider the special case where \ all division
rings are exactly the field $K.$ All such finite dimensional semisimple
algebras arise as a finite dimensional Leavitt path algebra as studied in 
\cite{1}. The Leavitt path algebras are introduced by Abrams and Aranda Pino
in 2005, \cite{3}. Many papers on Leavitt path algebras appeared in
literature since then. In the following discussion, we are particularly
interested in answering some combinatorial questions on the finite
dimensional Leavitt path algebras.

We start by recalling the definitions of a path algebra and a Leavitt path
algebra, see \cite{1}. A \textit{directed graph} $E=(E^{0},E^{1},r,s)$
consists of two countable sets $E^{0},E^{1}$ and functions $%
r,s:E^{1}\rightarrow E^{0}$. The elements $E^{0}$ and $E^{1}$ are called 
\textit{vertices} and \textit{edges}, respectively. For each $e\in E^{0},$ $%
s(e)$ is the source of $e$ and $r(e)$ is the range of $e.$ If $s(e)=v$ and $%
r(e)=w,$ then we say that $v$ emits $e$ and that $w$ receives $e.$ A vertex
which does not receive any edges is called a \textit{source,} and a vertex
which emits no edges is called a \textit{sink.} A graph is called \textit{%
row- finite} if $s^{-1}(v)$ is a finite set for each vertex $v$. For a
row-finite graph the edge set $E^{1}$ of $E~$is finite if its set of
vertices $E^{0}$ is finite. Thus, a row-finite graph is finite if $E^{0}$ is
a finite set.

A path in a graph $E$ is a sequence of edges $\mu =e_{1}\ldots e_{n}$ such
that $r(e_{i})=s(e_{i+1})$ for $i=1,\ldots ,n-1.$ In such a case, $s(\mu
):=s(e_{1})$ is the \textit{source }of $\mu $ and $r(\mu ):=r(e_{n})$ is the 
\textit{range} of $\mu $, and $n$ is the \textit{length }of $\mu ,$ i.e., $%
l(\mu )=n.$

If $s(\mu )=r(\mu )$ and $s(e_{i})\neq s(e_{j})$ for every $i\neq j$, then $%
\mu $ is called a \textit{cycle}. If $E$ does not contain any cycles, $E$ is
called \textit{acyclic}.

For $n\geq 2,$ define $E^{n}$ to be the set of paths of length $n,$ and $%
E^{\ast }=\bigcup\limits_{n\geq 0}E^{n}$ the set of all paths.

The path $K$-algebra over $E$ is defined as the free $K$-algebra $%
K[E^{0}\cup E^{1}]$ with the relations:

\begin{enumerate}
\item[(1)] $v_{i}v_{j}=\delta _{ij}v_{i}$ \ for every $v_{i},v_{j}\in E^{0}.$

\item[(2)] $e_{i}=e_{i}r(e_{i})=s(e_{i})e_{i}$ $\ $for every $e_{i}\in
E^{1}. $
\end{enumerate}

This algebra is denoted by $KE$. Given a graph $E,$ define the extended
graph of $E$ as the new graph $\widehat{E}=(E^{0},E^{1}\cup (E^{1})^{\ast
},r^{\prime },s^{\prime })$ where $(E^{1})^{\ast }=\{e_{i}^{\ast
}~|~e_{i}\in E^{1}\}$ and the functions $r^{\prime }$ and $s^{\prime }$ are
defined as 
\begin{equation*}
r^{\prime }|_{E^{1}}=r,~~~~s^{\prime }|_{E^{1}}=s,~~~~r^{\prime
}(e_{i}^{\ast })=s(e_{i})~~~~~~\text{and~~~~~}s^{\prime }(e_{i}^{\ast
})=r(e_{i}).
\end{equation*}%
The Leavitt path algebra of $E$ with coefficients in $K$ is defined as the
path algebra over the extended graph $\widehat{E},$ with relations:

\begin{enumerate}
\item[(CK1)] $e_{i}^{\ast }e_{j}=\delta _{ij}r(e_{j})$ \ for every $e_{j}\in
E^{1}$ and $e_{i}^{\ast }\in (E^{1})^{\ast }.$

\item[(CK2)] $v_{i}=\sum_{\{e_{j}\in
E^{1}~|~s(e_{j})=v_{i}\}}e_{j}e_{j}^{\ast }$ \ for every $v_{i}\in E^{0}$
which is not a sink.
\end{enumerate}

This algebra is denoted by $L_{K}(E)$. The conditions (CK1) and (CK2) are
called the Cuntz-Krieger relations. In particular condition (CK2) is the
Cuntz-Krieger relation at $v_{i}$. If $v_{i}$ is a sink, we do not have a
(CK2) relation at $v_{i}$. Note that the condition of row-finiteness is
needed in order to define the equation (CK2).

The main structure theorem in\ \cite{1} can be summarized as follows:

For any $v\in E^{0},$ we define $n(v)=\left\vert \left\{ \alpha \in E^{\ast
}~|~r(\alpha )=v\right\} \right\vert .$\newpage

\begin{proposition}
\label{prop1}:

\begin{enumerate}
\item The Leavitt path algebra $L_{K}(E)$ is a finite-dimensional $K$%
-algebra if and only if $E$ is a finite and acyclic graph.

\item If $A=\bigoplus\limits_{i=1}^{s}M_{n_{i}}(K)$ , then $A\cong L_{K}(E)$
for a graph $E$ having $s$ connected components each of which is an oriented
line graph with $n_{i}$ vertices, \linebreak $i=1,2,\cdots ,s.$

\item A finite dimensional $K$-algebra $A$ arises as a $L_{K}(E)$ for a
graph $E$ if and only if $A=\bigoplus\limits_{i=1}^{s}M_{n_{i}}(K).$

\item If $A=\bigoplus\limits_{i=1}^{s}M_{n_{i}}(K)$ and $A\cong L_{K}(E)$
for a finite, acyclic graph $E$, then the number of sinks of $E$ is equal to 
$s$, and each sink $v_{i}$ $(i=1,2,\cdots ,s)$ has $n(v_{i})=n_{i}$ with a
suitable indexing of the sinks.
\end{enumerate}
\end{proposition}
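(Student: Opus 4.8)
The plan is to prove all four parts through a single explicit description of $L_K(E)$ for a finite acyclic graph, namely the isomorphism $L_K(E)\cong\bigoplus_{v\text{ a sink}}M_{n(v)}(K)$, from which parts (3) and (4) are immediate and part (2) follows by specialization. I would begin by recalling the standard normal form: repeated use of (CK1) together with (CK2) shows that $L_K(E)$ is spanned as a $K$-vector space by the monomials $\alpha\beta^{\ast}$ with $\alpha,\beta\in E^{\ast}$ and $r(\alpha)=r(\beta)$. This reduction is the workhorse for everything that follows.

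For part (1), the implication ($\Leftarrow$) is then immediate: in a finite acyclic graph no path can repeat a vertex, so $l(\alpha)\le |E^{0}|$ and $E^{\ast}$ is finite, whence the spanning set above is finite and $\dim_K L_K(E)<\infty$. For ($\Rightarrow$) I would argue contrapositively. If $E^{0}$ is infinite, the vertices form an infinite set of orthogonal idempotents and $\dim_K L_K(E)=\infty$; if $E$ contains a cycle $c$, then $c,c^{2},c^{3},\dots$ are linearly independent, again forcing infinite dimension. Hence finite dimensionality forces $E$ to be finite and acyclic.

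The heart of the argument is the construction establishing parts (3) and (4). Fix a finite acyclic $E$ and a sink $v$, and enumerate the $n(v)$ paths $p_{1},\dots,p_{n(v)}$ with $r(p_i)=v$ (including the trivial path $v$ itself). Set $e^{(v)}_{ij}=p_ip_j^{\ast}$. The crucial computation is $p_j^{\ast}p_k=\delta_{jk}\,v$: by (CK1) the product is nonzero only when one of $p_j,p_k$ is an initial segment of the other, and since both terminate at the sink $v$ this forces $p_j=p_k$. It then follows that $e^{(v)}_{ij}e^{(w)}_{kl}=\delta_{vw}\delta_{jk}e^{(v)}_{il}$, so each sink contributes a copy of $M_{n(v)}(K)$ and distinct sinks give mutually orthogonal blocks. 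This yields an injection $\bigoplus_{v}M_{n(v)}(K)\hookrightarrow L_K(E)$; counting the sinks and the sizes $n(v)$ of the blocks gives part (4).

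Surjectivity is where I expect the main obstacle, and it is exactly here that acyclicity is used a second time. Using (CK2) one expands each non-sink vertex as $w=\sum_{s(e)=w}ee^{\ast}$, and iterating this expansion on the right of a monomial $\alpha\beta^{\ast}$ lengthens both paths; acyclicity guarantees the process terminates, writing $\alpha\beta^{\ast}$ as a sum of terms $\gamma\delta^{\ast}$ whose common range is a sink, i.e. as a combination of the $e^{(v)}_{ij}$. The same expansion applied to $1=\sum_{w\in E^{0}}w$ shows $\sum_{v\text{ a sink}}\sum_i e^{(v)}_{ii}=1$, confirming the embedding is unital and hence onto. The bookkeeping that each path terminating at a sink is produced exactly once in this expansion is the delicate point to get right. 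This establishes $L_K(E)\cong\bigoplus_{v}M_{n(v)}(K)$, proving (3). Finally, for (2) I would take $E$ to be the disjoint union of oriented line graphs $L_{n_1},\dots,L_{n_s}$; the Leavitt path algebra of a disjoint union is the direct sum of the individual algebras, each $L_{n_i}$ has a single sink with $n(v)=n_i$, and the isomorphism above gives $L_K(L_{n_i})\cong M_{n_i}(K)$, so $L_K(E)\cong\bigoplus_i M_{n_i}(K)\cong A$.
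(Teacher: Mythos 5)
Your proof is correct: the reduction to spanning monomials $\alpha\beta^{\ast}$ with $r(\alpha)=r(\beta)$, the matrix units $e^{(v)}_{ij}=p_ip_j^{\ast}$ built from the paths into a fixed sink (where $p_j^{\ast}p_k=\delta_{jk}v$ holds precisely because a sink emits no edges, so neither path can be a proper initial segment of the other), the (CK2) expansion pushing common ranges down to sinks for surjectivity, and Wedderburn--Artin uniqueness to extract part (4) from the resulting isomorphism $L_K(E)\cong\bigoplus_{v}M_{n(v)}(K)$ all hold up, with only routine facts (linear independence of vertices and of powers of a cycle, e.g.\ via the $\mathbb{Z}$-grading) left implicit. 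Note that the paper itself gives no proof of this proposition --- it is stated as a summary of the main structure theorem of \cite{1} --- and your argument is essentially the standard proof appearing in that reference, so there is no divergence to report.
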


\section{Truncated Trees}

For a finite dimensional Leavitt path algebra $L_{K}(E)$ of a graph $E$, we
would like to construct a distinguished graph $F$ having the Leavitt path
algebra isomorphic to $L_{K}(E)$ as follows:

\begin{theorem}
\label{thm 2}Let $E$ be a finite, acyclic graph with no isolated points. Let
\linebreak $s=|S(E)|$ where $S(E)$ is the set of sinks of $E$ and $N=\max
\{n(v)~|~v\in S(E)\}$. Then there exists a unique (up to isomorphism) tree $%
F $ with exactly one source and $s+N-1$ vertices such that $L_{K}(E)\cong
L_{K}(F)$.
\end{theorem}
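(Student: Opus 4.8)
The plan is to reduce everything to the classification in Proposition~\ref{prop1}. First I would record that since $E$ is finite and acyclic with sinks $v_1,\ldots,v_s$, parts (1), (3) and (4) give $L_K(E)\cong\bigoplus_{i=1}^{s}M_{n_i}(K)$ with $n_i=n(v_i)$. Because $E$ has no isolated points, every sink receives at least one edge, so each $n_i\geq 2$; after reindexing I may assume $n_1\geq\cdots\geq n_s$, whence $N=n_1$. The isomorphism type of $L_K(E)$ depends only on the multiset $\{n_1,\ldots,n_s\}$, so the task becomes: realize this multiset by a tree with one source and $s+N-1$ vertices, and show that tree is unique.

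Next I would pin down the shape of any tree $F$ with a unique source $r$. I claim every edge points away from $r$: starting at any vertex and repeatedly following an incoming edge backwards yields a path that, by finiteness and acyclicity, must terminate at a source, necessarily $r$; hence there is a directed path from $r$ to every vertex, and since $F$ is a tree this forces the standard rooted orientation. In that orientation the sinks are exactly the leaves, and for every vertex $v$ one has $n(v)=\operatorname{depth}(v)+1$, because the paths ending at $v$ are precisely the initial segments of the unique root-to-$v$ path.

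For existence I would build $F$ explicitly as a caterpillar: take a spine $u_0\to u_1\to\cdots\to u_{N-1}$ (so $u_{N-1}$ is a leaf with $n(u_{N-1})=N=n_1$), and for each $i=2,\ldots,s$ attach a new leaf $w_i$ as a child of $u_{n_i-2}$, which exists and is internal since $2\leq n_i\leq N$. Then $n(w_i)=n_i$, the tree has $N+(s-1)=s+N-1$ vertices and exactly $s$ sinks with $n$-values $\{n_1,\ldots,n_s\}$, and its unique source is $u_0$. Proposition~\ref{prop1} then gives $L_K(F)\cong\bigoplus_{i=1}^{s}M_{n_i}(K)\cong L_K(E)$.

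The uniqueness step is where the real content lies. Given any tree $F'$ with one source, $s+N-1$ vertices and $L_K(F')\cong L_K(E)$, Proposition~\ref{prop1}(4) forces $F'$ to have exactly $s$ sinks carrying the same multiset $\{n_i\}$ of $n$-values. Since the sinks are the leaves, $F'$ has $s$ leaves and hence exactly $(s+N-1)-s=N-1$ internal vertices. The key observation is that the root-to-leaf path reaching the deepest leaf (depth $N-1$) already passes through $N-1$ internal vertices, one at each depth $0,\ldots,N-2$, so these are \emph{all} the internal vertices; thus $F'$ has a single spine. Every remaining leaf then hangs directly off a spine vertex, and a leaf of $n$-value $n_i$, sitting at depth $n_i-1$, must have as parent the unique internal vertex at depth $n_i-2$. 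Consequently the number of leaves attached at each spine vertex is dictated by the multiplicities in $\{n_i\}$, and $F'\cong F$. I expect the orientation lemma and this internal-vertex count to be the main hurdles; the algebra isomorphism itself is immediate from Proposition~\ref{prop1}.
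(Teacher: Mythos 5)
Your proposal is correct and takes essentially the same approach as the paper: your caterpillar is exactly the paper's graph $F$ (a spine of $N$ vertices with pendant leaves attached at the positions dictated by the $n_i$), and your uniqueness argument is the paper's counting argument in different bookkeeping (the paper notes the $s-1$ vertices off the source-to-deepest-sink path must all be sinks; you note the $N-1$ non-sinks must all lie on that path), with your explicit orientation lemma merely left implicit in the paper. One harmless slip of wording: the paths ending at $v$ are the \emph{final}, not initial, segments of the root-to-$v$ path, though your count $n(v)=\mathrm{depth}(v)+1$ is correct.
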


\begin{proof}
Let the sinks $v_{1},v_{2},\ldots ,v_{s}$ of $E$ be indexed \ such that 
\begin{equation*}
2\leq n(v_{1})\leq n(v_{2})\leq \ldots \leq n(v_{s})=N.
\end{equation*}%
Define a graph $F=(F^{0},F^{1},r,s)$ as follows:%
\begin{eqnarray*}
F^{0} &=&\{u_{1},u_{2},\ldots ,u_{N},w_{1},w_{2},\ldots w_{s-1}\} \\
F^{1} &=&\{e_{1},e_{2},\ldots ,e_{N-1},f_{1},f_{2},\ldots ,f_{s-1}\} \\
s(e_{i}) &=&u_{i}\text{ \ \ \ \ and \ \ \ }r(e_{i})=u_{i+1}\text{\ \ \ \ \ \
\ \ \ \ \ \ }i=1,\ldots ,N-1 \\
s(f_{i}) &=&u_{n(v_{i})-1}\text{ \ \ \ \ \ \ and \ \ \ \ }r(f_{i})=w_{i}%
\text{ \ \ \ \ \ \ }i=1,\ldots ,s-1.
\end{eqnarray*}%
\FRAME{dhF}{3.7317in}{1.1467in}{0pt}{}{}{Figure}{\special{language
"Scientific Word";type "GRAPHIC";maintain-aspect-ratio TRUE;display
"USEDEF";valid_file "T";width 3.7317in;height 1.1467in;depth
0pt;original-width 12.8961in;original-height 3.9271in;cropleft "0";croptop
"1";cropright "1";cropbottom "0";tempfilename
'LXQCLQ00.wmf';tempfile-properties "XPR";}}Clearly, $F$ is a directed tree
with unique source $u_{1}$ and $s+N-1$ vertices. $F$ has exactly $s$ sinks,
namely $u_{N},w_{1},w_{2},\ldots w_{s-1}$ with $n(u_{N})=N,$ $%
n(w_{i})=n(v_{i}),$ \ \ $i=1,\ldots ,s-1.$ Therefore, $L_{K}(E)\cong
L_{K}(F).$

For the uniqueness part, take a tree $T$ with exactly one source and
\linebreak $s+N-1$ vertices such that $L_{K}(E)\cong L_{K}(T)$. Since $%
N=\max \{n(v)~|~v\in S(E)\}$ which is equal to the square root of the
maximum of the $K$-dimensions of the minimal ideals of $L_{K}(E)$ and hence $%
L_{K}(T),$ there exists a sink $v$ in $T$ with $\left\vert \{\mu _{i}\in
T^{\ast }~|~r(\mu _{i})=v\}\right\vert =N.$ On the other hand, since $T$ is
a tree with a unique source and hence any vertex is connected to the unique
source by a uniquely determined path, we see that the unique path joining $v$
to the source must contain exactly $N$ \ vertices, say $a_{1},...,a_{N-1},v$
\ where $a_{1}$ is the unique source and the length of the path joining $%
a_{k}$ to $a_{1}$ being equal to $k-1$ for any \linebreak $k=1,2,...,N-1$.
As $L_{K}(E)=\bigoplus\limits_{i=1}^{s}M_{n_{i}}(K)$ with $s$ summands, the
remaining $s-1$ vertices must then all be sinks by Proposition \ref{prop1}
(4), say $b_{1},...,b_{s-1}.$ Since for any vertex $a$ different from the
unique source we have $n(a)>1$ we see that for each $i=1,\ldots ,s-1$ there
exists an edge $g_{i}$ with $r(g_{i})=b_{i}.$ Since $s(g_{i})$ is not a sink
we see that $s(g_{i})\in \{a_{1},a_{2},...,a_{N-1}\},$ more precisely $%
s(g_{i})=a_{n(b_{i})-1},$ $i=1,2,...,s-1.$ Thus \ $T$ is isomorphic to $F.$
\end{proof}

Observe that the $F$ constructed in Theorem \ref{thm 2} is the tree with one
source and smallest possible number of vertices $(s+N-1)$ having $L_{K}(F)$
isomorphic to $L_{K}(E).$ We call $F$ constructed in Theorem \ref{thm 2} as
the\textit{\ truncated tree associated with} $E.$

\begin{proposition}
With the above definition of $F$, there is no tree $T$ with \linebreak $%
|T^{0}|<|F^{0}|$ such that $L_{K}(T)\cong L_{K}(F).$
\end{proposition}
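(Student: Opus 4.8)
The plan is to prove the statement directly as a minimality estimate: any tree $T$ with $L_{K}(T)\cong L_{K}(F)$ must satisfy $|T^{0}|\geq s+N-1=|F^{0}|$. Since $L_{K}(F)\cong L_{K}(E)\cong\bigoplus_{i=1}^{s}M_{n_{i}}(K)$ with $N=\max_{i}n_{i}$, I would first invoke Proposition~\ref{prop1}: part (1) forces $T$ to be finite and acyclic, and part (4) guarantees that $T$ has exactly $s$ sinks whose invariants $n(\cdot)$ are precisely $n_{1},\ldots,n_{s}$. In particular $T$ possesses a sink $v^{*}$ with $n(v^{*})=N$. The whole argument then rests on producing $s+N-1$ pairwise distinct vertices inside $T$.

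The key observation is that in a tree the number $n(v^{*})$ counts vertices rather than merely paths. Because the underlying undirected graph of $T$ is a tree, between any two vertices there is a unique undirected path; hence a directed path ending at $v^{*}$ issues from a vertex $u$ exactly when that unique path is coherently oriented towards $v^{*}$, and when it exists it is unique. I would use this to show that $\alpha\mapsto s(\alpha)$ is a bijection from $\{\alpha\in T^{*}~|~r(\alpha)=v^{*}\}$ onto the set $A$ of vertices admitting a directed path to $v^{*}$, where $v^{*}\in A$ via the length-zero path. This yields $|A|=n(v^{*})=N$.

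Next I would separate sinks from ancestors. Each vertex $u\in A\setminus\{v^{*}\}$ emits the first edge of its path to $v^{*}$ and so is not a sink; therefore $v^{*}$ is the only sink contained in $A$. The remaining $s-1$ sinks of $T$ are consequently disjoint from $A$, and adjoining them to $A$ exhibits $N+(s-1)=s+N-1$ pairwise distinct vertices of $T$. This gives $|T^{0}|\geq s+N-1=|F^{0}|$, which is exactly the assertion.

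I expect no serious obstacle once the correct invariant is isolated; the single delicate point is the bijection in the second step, where the tree hypothesis is indispensable. In a general finite acyclic graph several distinct paths may share both source and range, so $n(v^{*})$ would over-count the vertices in $A$ and the estimate could collapse. I would therefore state explicitly that the forest structure of $T$ forces uniqueness of the directed path between any two vertices, since this is precisely what converts the path-count $N$ into an honest vertex-count and drives the entire bound.
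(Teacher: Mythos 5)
Your proposal is correct and follows essentially the same argument as the paper: both use Proposition~\ref{prop1}(4) to fix the number of sinks at $s$ and locate a sink with $n(\cdot)=N$, and both exploit the tree property to identify paths ending at that sink with the vertices contributing to it, so that those $N$ vertices (all non-sinks except the sink itself) together with the remaining $s-1$ sinks force $|T^{0}|\geq s+N-1$. The only difference is presentational: the paper phrases this count as a proof by contradiction, while you state it directly as a lower bound.
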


\begin{proof}
Notice that since $T$ is a tree, any vertex contributing to a sink
represents a unique path ending at that sink. Assume on the contrary there
exists a tree $T$ with $n$ vertices and $L_{K}(T)\cong
A=\bigoplus\limits_{i=1}^{s}M_{n_{i}}(K)$ such that $n<s+N-1.$ Since $N$ is
the maximum of $n_{i}$'$s$ there exists a sink with $N$ vertices
contributing. But in $T$ the number $n-s$ of vertices which are not sinks is
less than $N-1.$ Hence the maximum contribution to any sink can be at most $%
n-s+1$ which is strictly less than $N.$ This is the desired contradiction.
\end{proof}

However if we omit the tree assumption then it is possible to find a graph $%
G $ with smaller number of vertices having $L_{K}(G)$ isomorphic to $%
L_{K}(E) $ as the next example illustrates.

\begin{example}
\ \ \ \FRAME{dhF}{3.1678in}{0.7697in}{0pt}{}{}{Figure}{\special{language
"Scientific Word";type "GRAPHIC";maintain-aspect-ratio TRUE;display
"USEDEF";valid_file "T";width 3.1678in;height 0.7697in;depth
0pt;original-width 8.4475in;original-height 2.0314in;cropleft "0";croptop
"1";cropright "1";cropbottom "0";tempfilename
'LXL3CQ04.wmf';tempfile-properties "XPR";}}Both $L_{K}(G)\cong M_{3}(K)\cong
L_{K}(F)$ and $|G^{0}|=2$ where as $|F^{0}|=3$.
\end{example}

\bigskip

Given $F_{1}$, $F_{2}$ truncated trees associated with graphs $G_{1}$ and $%
G_{2}$ respectively, then $F_{1}\cong F_{2}$ iff $L_{K}(F_{1})\cong
L_{K}(F_{2})$ so there is a one-to-one correspondence between the Leavitt
path algebra and truncated trees.

For a given finite dimensional Leavitt path algebra $A=\bigoplus%
\limits_{i=1}^{s}M_{n_{i}}(K)$ with $2\leq n_{1}\leq n_{2}\leq \ldots \leq
n_{s}=N,$ the number $s$ is the number of minimal ideals of $A$ and $N^{2}$
is the maximum of the dimensions of these ideals. Therefore $\kappa
(A)=s+N-1 $ is a uniquely determined algebraic invariant of $A$. Given $%
m\geq 2$, the number of isomorphism classes of finite dimensional Leavitt
path algebras $A$ which do not have any ideals isomorphic to $K$ and $\kappa
(A)=m$ is equal to the number of distinct truncated trees with $m$ vertices
by the previous paragraph. The next proposition computes this number.

\begin{definition}
Define a function $d:E^{0}\rightarrow 
\mathbb{N}
$ such that for any $u\in E^{0}$,%
\begin{equation*}
d(u)=\left\vert \{v~|~~n(v)\leq n(u)\}\right\vert .
\end{equation*}
\end{definition}

Observe that in a truncated tree, the restriction of the function $d$ on the
set of vertices which are not sinks is one to one.

\begin{proposition}
The number of distinct truncated trees with $n$ vertices is $2^{n-2}.$
\end{proposition}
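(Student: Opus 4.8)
The plan is to set up a bijection between truncated trees on $n$ vertices and a simple combinatorial object whose cardinality is manifestly $2^{n-2}$. By the discussion preceding the proposition, a truncated tree with $n$ vertices corresponds, in a one-to-one fashion, to a finite dimensional algebra $A=\bigoplus_{i=1}^{s}M_{n_{i}}(K)$ with $2\leq n_{1}\leq \cdots \leq n_{s}=N$ and $\kappa (A)=s+N-1=n$. So it suffices to count such data.

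First I would read off, from the explicit construction of $F$ in Theorem \ref{thm 2}, the $n$-values of every vertex. The spine $u_{1},\ldots ,u_{N}$ satisfies $n(u_{k})=k$, so the values $1,2,\ldots ,N$ are each realized, and the source $u_{1}$ is the unique vertex with $n=1$; each remaining sink $w_{i}$ has $n(w_{i})=n_{i}\in \{2,\ldots ,N\}$. Hence, setting $m_{k}=\left\vert \{v\in F^{0}~|~n(v)=k\}\right\vert $, one has $m_{1}=1$, $m_{k}\geq 1$ for every $2\leq k\leq N$, and $\sum_{k=1}^{N}m_{k}=|F^{0}|=n$.

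The key step is to observe that $F\mapsto (m_{2},m_{3},\ldots ,m_{N})$ is a bijection from truncated trees on $n$ vertices onto compositions of $n-1$ into positive parts. Indeed, the relation $\sum_{k=2}^{N}m_{k}=n-1$ with each $m_{k}\geq 1$ says precisely that $(m_{2},\ldots ,m_{N})$ is such a composition, the number of parts being $N-1$. For the inverse, from a composition $(c_{1},\ldots ,c_{N-1})$ of $n-1$ I reconstruct $F$ by taking a spine $u_{1}\rightarrow \cdots \rightarrow u_{N}$ and attaching $c_{k-1}-1$ additional sinks to $u_{k-1}$ for each $2\leq k\leq N$; this is forced by Theorem \ref{thm 2}, since a sink $w$ with $n(w)=k$ must be joined to the unique non-sink vertex $u_{k-1}$ (the one with $n(u_{k-1})=k-1$). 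The two maps are then mutually inverse.

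Finally, the number of compositions of $n-1$ into positive parts is $2^{(n-1)-1}=2^{n-2}$, which gives the claim. The point to handle with care is the well-definedness of the bijection in both directions: one must check that every value $1,\ldots ,N$ is attained, so that the $m_{k}$ genuinely form a composition with no zero parts, and that the reconstruction yields an honest truncated tree. The remark that $d$ is one-to-one on the non-sinks is exactly the statement that the non-sink vertices carry the distinct values $1,\ldots ,N-1$, which underpins this verification. If one prefers to avoid compositions, the same total drops out of the routine identity $\sum_{N=2}^{n}\binom{n-2}{N-2}=2^{n-2}$ obtained by counting, for each fixed $N$, the multisets of $s-1=n-N$ attachment values drawn from $\{2,\ldots ,N\}$.
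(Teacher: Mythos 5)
Your proof is correct and is essentially the paper's own argument in a different packaging: the paper encodes a truncated tree as the $\{0,1\}$-string of length $n$ whose $1$'s occupy the positions $d(v)$ of the non-sink vertices, and your composition $(m_{2},\ldots ,m_{N})$ is precisely the gap sequence of that string, so your bijection with compositions of $n-1$ and the paper's bijection with binary strings of length $n$ having first entry $1$ and last entry $0$ are the same map up to the standard stars-and-bars correspondence. Both arguments rest on the identical structural fact, namely that a truncated tree is uniquely determined by, and can arbitrarily prescribe, how many vertices carry each $n$-value.
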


\begin{proof}
For every truncated tree $E$ with $n$ vertices we assign an $n$-vector
\linebreak $\alpha (E)=(\alpha _{1},\alpha _{2},\cdots ,\alpha _{n})$ where $%
\alpha _{i}\in \{0,1\}$ as follows:

\begin{itemize}
\item $\alpha (E)$ contains exactly $N-1$ many $1$'s where $N-1$ is the
number of non-sinks of $E$.

\item To define that vector it is sufficient to know which component\ is $1.$

\item To each vertex $v$ which is not a sink, we assign a $1$ appearing in
the $d(v)$-$th$ component.

\item Remaining components are all zero.
\end{itemize}

Hence $\alpha (E)$ starts with $1$ and ends with $0$.

Given any $\{0,1\}$ sequence $\beta $ of length $n$ starting with $1$ and
ending with $0$, there exists clearly a unique truncated tree $E$ with $n$
vertices such that $\alpha (E)=\beta .$ Hence the number of distinct
truncated trees with $n$ vertices is equal to the number of all $\{0,1\}$%
-sequences of length $n$ in which the first and last components are constant
which is equal to $2^{n-2}.$
\end{proof}

\bigskip

For a tree $F$ with $n$ vertices the $K$-dimension of $L_{K}(F)$ is not
uniquely determined by the number of vertices only. However, we can compute
the maximum and the minimum $K$-dimensions of $L_{K}(F)$ where $F$ ranges
over all possible trees with $n$ vertices.

\begin{lemma}
\label{lemma7} The maximum $K$-dimension of $L_{K}(E)$ where $E$ ranges over
all possible trees with $n$ vertices and $s$ sinks is equal to $s(n-s+1)^{2} 
$.
\end{lemma}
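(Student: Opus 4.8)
The plan is to reduce the dimension computation to a purely combinatorial optimization over the sinks of $E$, and then both to bound and to realize the optimum. By Proposition \ref{prop1}(3)--(4), writing $L_{K}(E)\cong\bigoplus_{i=1}^{s}M_{n_{i}}(K)$, the $s$ summands correspond bijectively to the $s$ sinks of $E$, and each sink $v_{i}$ satisfies $n(v_{i})=n_{i}$. Since $\dim_{K}M_{n_{i}}(K)=n_{i}^{2}$, this gives the master formula
\[
\dim_{K}L_{K}(E)=\sum_{v\in S(E)}n(v)^{2},
\]
so the problem becomes: over all trees with $n$ vertices and $s$ sinks, maximize $\sum_{v\in S(E)}n(v)^{2}$.

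First I would establish a uniform upper bound on each individual term. The key observation is that because $E$ is a tree, any path terminating at a fixed vertex $v$ is completely determined by its source vertex, so $n(v)$ equals the number of vertices $u$ (including $v$ itself) from which $v$ can be reached by a directed path. Every such $u$ other than $v$ lies strictly before $v$ on a path and hence emits an edge, so it is a non-sink; and $v$ itself is a sink. As $E$ has exactly $n-s$ non-sinks, this yields $n(v)\leq (n-s)+1$ for every sink $v$.

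Summing the bound over the $s$ sinks gives $\dim_{K}L_{K}(E)\leq s(n-s+1)^{2}$, the claimed value. It remains to produce a tree attaining equality, i.e.\ one in which every sink simultaneously achieves $n(v)=n-s+1$. I would take the ``broom'': arrange the $n-s$ non-sinks in a single directed line $a_{1}\to a_{2}\to\cdots\to a_{n-s}$ and attach all $s$ sinks $b_{1},\ldots,b_{s}$ by edges $a_{n-s}\to b_{j}$. Then each sink $b_{j}$ is reachable from each of the $n-s+1$ vertices $a_{1},\ldots,a_{n-s},b_{j}$, so $n(b_{j})=n-s+1$ and the bound is met with equality, completing the argument.

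The main obstacle is the equality case rather than the bound. The per-sink inequality is immediate once $n(v)$ is read off via ancestors, but it is a priori unclear that all $s$ sinks can hit the bound at once: naively, giving each sink its own chain of $n-s$ non-sinks would require $s(n-s)$ non-sinks. The resolution — and the conceptual point — is that in a tree the ancestor chains of distinct sinks may (indeed, near the source, must) overlap, so a single shared line of non-sinks can feed every sink simultaneously. Verifying that the broom is genuinely a tree with exactly $n$ vertices and $s$ sinks, and that the sharing incurs no loss, is the substantive content of the proof.
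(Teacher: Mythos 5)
Your proof is correct and follows essentially the same route as the paper: bound each $n_i$ by $n-s+1$ (so $\dim L_K(E)=\sum n_i^2\leq s(n-s+1)^2$), then exhibit the ``broom'' tree attaining equality, which is exactly the tree the paper sketches in its figure. You merely spell out two points the paper leaves implicit, namely the ancestor-counting argument for $n(v)\leq n-s+1$ and the explicit verification that the broom works.
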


\begin{proof}
Assume $E$ is a tree with $n$ vertices. Then $L_{K}(E)\cong
\bigoplus\limits_{i=1}^{s}M_{n_{i}}(K),$ by Proposition \ref{prop1} (3)
where $s$ is the number of sinks in $E$ and $n_{i}\leq n-s+1$ for all $%
i=1,\ldots s.$ Hence 
\begin{equation*}
\dim L_{K}(E)=\sum\limits_{i=1}^{s}n_{i}^{2}\leq s(n-s+1)^{2}.
\end{equation*}%
Notice that there exists a tree $E$ as sketched below\FRAME{dhF}{3.1254in}{%
0.921in}{0pt}{}{}{Figure}{\special{language "Scientific Word";type
"GRAPHIC";maintain-aspect-ratio TRUE;display "USEDEF";valid_file "T";width
3.1254in;height 0.921in;depth 0pt;original-width 8.3022in;original-height
2.4267in;cropleft "0";croptop "1";cropright "1";cropbottom "0";tempfilename
'LXL3CQ00.wmf';tempfile-properties "XPR";}}with $n$ vertices and\ $s$ sinks
such that $\dim L_{K}(E)=s(n-s+1)^{2}.$
\end{proof}

\begin{theorem}
The maximum $K$-dimension of $L_{K}(E)$ where $E$ ranges over all possible
trees with $n$ vertices is given by $f(n)$ where 
\begin{equation*}
f(n)=\left\{ 
\begin{array}{ccc}
\dfrac{n(2n+3)^{2}}{27} & if & n\equiv 0\text{ \ }(\func{mod}3) \\ 
\text{ \ \ \ } &  &  \\ 
\dfrac{1}{27}\left( n+2\right) \left( 2n+1\right) ^{2} & if & n\equiv 1\text{
\ }(\func{mod}3) \\ 
\text{ \ \ \ } &  &  \\ 
\dfrac{4}{27}(n+1)^{3} & if & n\equiv 2\text{ \ }(\func{mod}3)%
\end{array}%
\right.
\end{equation*}
\end{theorem}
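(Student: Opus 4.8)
The plan is to reduce everything to the single-variable optimization already supplied by Lemma~\ref{lemma7}. That lemma tells us the largest dimension achievable by a tree with $n$ vertices and exactly $s$ sinks is $g(s):=s(n-s+1)^2$, and that this bound is attained. Hence the maximum over \emph{all} trees with $n$ vertices is obtained simply by maximizing $g(s)$ over the admissible integer values of $s$. A tree with $n$ vertices has at least one sink and at least one non-sink (the source), so $s$ ranges over $\{1,2,\dots,n-1\}$, and I would maximize $g$ over this set.

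First I would treat $s$ as a real variable and locate the critical point. Differentiating gives
\begin{equation*}
g'(s) = (n-s+1)\bigl((n-s+1) - 2s\bigr) = (n-s+1)(n+1-3s),
\end{equation*}
and since $n-s+1>0$ throughout the interval, $g'(s)$ has the sign of $n+1-3s$. Thus $g$ is strictly increasing for $s<(n+1)/3$ and strictly decreasing for $s>(n+1)/3$; that is, $g$ is unimodal with a single interior maximum at $s=(n+1)/3$ (and for $n\geq 3$ this point lies strictly inside $(1,n-1)$, so boundary values of $s$ never compete). Consequently the integer maximizer is one of the two integers bracketing $(n+1)/3$, and which one it is depends on $n \bmod 3$.

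The remaining step is the case analysis, which I expect to be the only place needing care, though it is routine. Writing $n=3k$, $n=3k+1$, $n=3k+2$ respectively, the point $(n+1)/3$ equals $k+\tfrac13$, $k+\tfrac23$, and the exact integer $k+1$. In the first two cases I would evaluate $g$ at both flanking integers and compare: for $n=3k$ one finds $g(k)=k(2k+1)^2$ exceeds $g(k+1)=4k^2(k+1)$ by exactly $k$, so $s=k$ wins; for $n=3k+1$ one finds $g(k+1)=(k+1)(2k+1)^2$ exceeds $g(k)=4k(k+1)^2$ by exactly $k+1$, so $s=k+1$ wins; for $n=3k+2$ the maximizer is the integer $s=k+1$ directly, giving $g(k+1)=4(k+1)^3$. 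Substituting $k=n/3$, $k=(n-1)/3$, $k=(n-2)/3$ back into these three values and simplifying yields $n(2n+3)^2/27$, $(n+2)(2n+1)^2/27$, and $4(n+1)^3/27$ respectively, matching $f(n)$. The main obstacle is simply bookkeeping the two comparisons correctly so that the right flanking integer is selected in each residue class; once unimodality pins the maximizer to an integer adjacent to $(n+1)/3$, the rest is direct substitution.
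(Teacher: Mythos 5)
Your proposal is correct and takes essentially the same approach as the paper: both reduce the problem to maximizing $s(n-s+1)^2$ over $s\in\{1,\ldots,n-1\}$ via Lemma~\ref{lemma7}, find the critical point $s=(n+1)/3$ by differentiation, and resolve the three residue classes of $n$ modulo $3$ by comparing the values at the flanking integers. Your explicit margin computations (differences of $k$ and $k+1$) agree with the paper's comparisons $\alpha_1>\alpha_2$ and $\beta_2>\beta_1$, and the final substitutions match $f(n)$ in all three cases.
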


\begin{proof}
Assume $E$ is a tree with $n$ vertices. Then $L_{K}(E)\cong
\bigoplus\limits_{i=1}^{s}M_{n_{i}}$ where $s$ is the number of sinks in $E$%
. Now, to find $\max \dim L_{K}(E)$ we need only to determine maximum value
of the function $f(s)=s(n-s+1)^{2}$ for \linebreak $s=1,2,\ldots ,n-1.$
Extending the domain of $f(s)$ to real numbers $1\leq s\leq n-1$ we get a
continuous function, hence we can find its maximum value.%
\begin{equation*}
f(s)=s(n-s+1)^{2}\Rightarrow \frac{d}{ds}\left( s(n-s+1)^{2}\right) =\left(
n-3s+1\right) \left( n-s+1\right)
\end{equation*}%
Then $s=\dfrac{n+1}{3}$ is the only critical point in the interval $\left[
1,n-1\right] $ and since $\dfrac{d^{2}f}{ds^{2}}(\dfrac{n+1}{3})<0,$ it is a
local maximum. In particular $f$ is increasing on $\left[ 1,\dfrac{n+1}{3}%
\right] $ and decreasing on $\left[ \dfrac{n+1}{3},n-1\right] $ . We have
three cases:

\textbf{Case 1: } $n\equiv 2$\ \ $(\func{mod}3).$ In this case $s=\dfrac{n+1%
}{3}$ is an integer and maximum $K$-dimension of $L_{K}(E)$ is $f\left( 
\dfrac{n+1}{3}\right) =\dfrac{4}{27}\left( n+1\right) ^{3}$ and we have $%
n_{i}=\dfrac{2(n+1)}{3},$ for each $i=1,2,\ldots ,s.\vspace{0.25in}$

\textbf{Case 2: }$n\equiv 0$ \ $(\func{mod}3).$ Then we have: $\frac{n}{3}%
=t<t+\dfrac{1}{3}=s<t+1$ and\newline
\begin{equation*}
f\left( \frac{n}{3}\right) =\frac{(2n+3)^{2}n}{27}=\alpha _{1}\text{ and }%
f\left( \frac{n}{3}+1\right) =\frac{4n^{2}(n+3)}{27}=\alpha _{2}.
\end{equation*}%
Note that, $\alpha _{1}>\alpha _{2}$. So $\alpha _{1}$ is maximum $K$%
-dimension of $L_{K}(E)$ and we have $n_{i}=\dfrac{2}{3}n+1,$ for each $%
i=1,2,\ldots ,s$.$\vspace{0.25in}$

\textbf{Case 3:} $n\equiv 1$ \ $(\func{mod}3).$ Then $\dfrac{n-1}{3}=$ $t<t+%
\dfrac{2}{3}=s<t+1$ and \newline
\begin{equation*}
f\left( \frac{n-1}{3}\right) =\frac{4}{27}\left( n+2\right) ^{2}\left(
n-1\right) =\beta _{1}\text{ }
\end{equation*}%
and 
\begin{equation*}
f\left( \frac{n+2}{3}\right) =\frac{1}{27}\left( 2n+1\right) ^{2}\left(
n+2\right) =\beta _{2}.
\end{equation*}%
In this case $\beta _{2}>\beta _{1}$ and so $\beta _{2}$ gives the maximum $%
K $-dimension of $L_{K}(E)$ and we have $n_{i}=\dfrac{2n+1}{3},$ for each $%
i=1,2,\ldots ,s$.
\end{proof}

\begin{theorem}
The minimum $K$-dimension of $L_{K}(E)$ where $E$ ranges over all possible
trees with $n$ vertices and $s$ sinks is equal to $r(q+2)^{2}+(s-r)(q+1)^{2}$%
, where $n-1=qs+r,~~0\leq r<s.$
\end{theorem}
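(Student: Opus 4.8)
The plan is to turn this into a constrained integer optimization. By Proposition \ref{prop1}(3)--(4), for a tree $E$ with $n$ vertices and sinks $v_1,\dots ,v_s$ we have $L_K(E)\cong \bigoplus_{i=1}^s M_{n_i}(K)$ with $n_i=n(v_i)$, so that $\dim_K L_K(E)=\sum_{i=1}^s n_i^2$. Hence I must minimize $\sum_{i=1}^s n_i^2$ as $E$ ranges over all trees with $n$ vertices and $s$ sinks, and the whole argument splits into (a) identifying the sharp constraint satisfied by the realizable vectors $(n_1,\dots ,n_s)$, and (b) minimizing a sum of squares under that constraint. First I would record what $n(v)$ means on a tree: since the underlying graph is a tree there is at most one directed path between two vertices, so paths ending at $v$ are indexed by their sources and $n(v)=|\{u\in E^0:u\text{ reaches }v\}|$. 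Writing $\rho(u)$ for the number of sinks reachable from $u$ by a directed path, double counting yields
\begin{equation*}
\sum_{i=1}^s n_i=\sum_{v\in S(E)}|\{u:u\text{ reaches }v\}|=\sum_{u\in E^0}\rho(u).
\end{equation*}

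The decisive step, which I expect to carry the weight of the proof, is the sharp lower bound $\sum_{i=1}^s n_i\ge n+s-1$. I would derive it from the recursion $\rho(u)=\sum_{e:\,s(e)=u}\rho(r(e))$, valid for every non-sink $u$ because in a tree the subtrees hanging from distinct out-edges are vertex-disjoint, together with $\rho(v)=1$ for each sink $v$. Since every edge has a non-sink source, summing over all vertices collapses to
\begin{equation*}
\sum_{u\in E^0}\rho(u)=s+\sum_{e\in E^1}\rho(r(e))\ge s+|E^1|=s+(n-1),
\end{equation*}
using $\rho(r(e))\ge 1$ and $|E^1|=n-1$. This pins down the constraint: every realizable vector satisfies $\sum_i n_i\ge n+s-1$, with equality precisely when $\rho(r(e))=1$ for all edges, as happens for the spider trees used below.

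It then remains to minimize $\sum_i n_i^2$ over integer vectors with $n_i\ge 2$ and $\sum_i n_i\ge n+s-1$. By convexity, for a fixed total $T$ the sum $\sum_i n_i^2$ is minimized by the most balanced distribution, and this minimum is non-decreasing in $T$; hence the global minimum occurs when $\sum_i n_i=n+s-1$ with the $n_i$ differing by at most $1$. Writing $n-1=qs+r$ with $0\le r<s$ gives $n+s-1=s(q+1)+r$, so the balanced optimum has $r$ entries equal to $q+2$ and $s-r$ entries equal to $q+1$, with value $r(q+2)^2+(s-r)(q+1)^2$; note $n\ge s+1$ forces $q\ge 1$, so every entry is indeed $\ge 2$.

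Finally I would exhibit a tree attaining this value, thereby showing the bound is best possible: the spider with a single source $u_0$ and $s$ pairwise disjoint directed legs, of which $r$ have length $q+1$ and $s-r$ have length $q$. A vertex count gives $1+r(q+1)+(s-r)q=1+(qs+r)=n$ vertices and exactly $s$ sinks, with $n(v_i)\in\{q+2,q+1\}$ distributed as above, so $\dim_K L_K(E)=r(q+2)^2+(s-r)(q+1)^2$. Combined with the lower bound this identifies the minimum, completing the proof. The only genuinely delicate point is the sharp inequality $\sum_i n_i\ge n+s-1$: it is what rules out all unbalanced and non-spider trees at once, and the edge-collapsing identity above is the cleanest route to it.
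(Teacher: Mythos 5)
Your proof is correct, but it is organized quite differently from the paper's. The paper stays inside the category of trees throughout: it introduces \emph{bunch trees} (your spiders), parametrized by tuples $(t_1,\dots,t_s)$ with $\sum t_i=n-1$, proves by a repeated exchange argument --- replacing a tuple with $u_s-u_1\ge 2$ by the reordered $(u_1+1,u_2,\dots,u_{s-1},u_s-1)$, which strictly lowers $\sum (u_i+1)^2$ --- that the balanced bunch tree is minimal among bunch trees, and then reduces an arbitrary tree $F$ to a bunch tree: setting $n_i=n(v_i)-1$, it asserts $\sum_i n_i\ge n-1$ and explicitly constructs a dominated tuple $(m_1,\dots,m_s)$ with $m_i\le n_i$ and $\sum_i m_i=n-1$, whose bunch tree $M$ satisfies $d_M\le d_F$. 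You instead extract the single sharp constraint $\sum_i n(v_i)\ge n+s-1$ and solve a pure integer program by convexity, which collapses the paper's two reduction steps (arbitrary tree $\to$ bunch tree $\to$ balanced bunch tree) into one balancing step on vectors; both arguments then realize the optimum by the same spider graph. The substance is the same --- everything rests on that inequality plus discrete convexity of $\sum x_i^2$ --- but your version has a genuine advantage: the inequality that carries the whole proof appears in the paper only as a bare ``Observe that $n_1+\cdots+n_s\ge n-1$,'' whereas your recursion $\rho(u)=\sum_{s(e)=u}\rho(r(e))$ (using disjointness of the subtrees below distinct out-edges of $u$) and the resulting identity $\sum_{u}\rho(u)=s+\sum_{e\in E^1}\rho(r(e))\ge s+(n-1)$ constitute an actual proof of it, and they isolate exactly where the tree hypothesis enters. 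What the paper's more hands-on route buys is an explicit picture of how an arbitrary tree degenerates to a bunch tree; as pure logic, your write-up is tighter and fills the one gap the paper leaves unargued.
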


\begin{proof}
We call a graph a \textit{bunch tree} if it is obtained by identifiying the
unique sources of the finitely many oriented finite line graphs.\FRAME{dhF}{%
2.3756in}{1.6864in}{0pt}{}{}{Figure}{\special{language "Scientific
Word";type "GRAPHIC";maintain-aspect-ratio TRUE;display "USEDEF";valid_file
"T";width 2.3756in;height 1.6864in;depth 0pt;original-width
6.4584in;original-height 4.5731in;cropleft "0";croptop "1";cropright
"1";cropbottom "0";tempfilename 'LXL3CQ01.wmf';tempfile-properties "XPR";}}%
Let $\mathcal{E}(n,s)$ be the set of all bunch trees with $n$ vertices and $%
s $ sinks.

Every element of $\mathcal{E}(n,s)$ can be uniquely represented by an $s$%
-tuple $(t_{1},t_{2},...,t_{s})$ where each $t_{i}$ is the number of
vertices contributing only to the $i$-th$^{\text{ }}$sink with $1\leq
t_{1}\leq t_{2}\leq ...\leq t_{s}$ and $t_{1}+t_{2}+...+t_{s}=n-1.$

Let $E\in \mathcal{E}(n,s)$ with $t_{s}-t_{1}\leq 1$. This $E$ is
represented by the $s$-tuple $(q,\ldots ,q,\underset{}{q+1,\ldots ,q}+1)$
where $n-1=sq+r$, $0\leq r<s.$

Now we claim that the dimension of $E$ is the minimum of the set 
\begin{equation*}
\left\{ \dim L_{K}(F):F\text{ tree with }s\text{ sinks and }n\text{ vertices}%
\right\} .
\end{equation*}%
If we represent $U\in \mathcal{E}(n,s)$ by the $s$-tuple $%
(u_{1},u_{2},...,u_{s})$ then $E\neq U$ implies that $u_{s}-u_{1}\geq 2.$

Consider the $s$-tuple $(t_{1},t_{2},...,t_{s})$ where $%
(t_{1},t_{2},...,t_{s})$ is obtained from \linebreak $%
(u_{1}+1,u_{2},...,u_{s-1},u_{s}-1)$ by reordering the components in
increasing order.

In this case the dimension $d_{U}$ of $U$ is%
\begin{equation*}
d_{U}=(u_{1}+1)^{2}+\ldots +(u_{s}+1)^{2}.
\end{equation*}%
Similarly, the dimension $d_{T}$ of the bunch graph $T$ represented by the $%
s $-tuple $(t_{1},t_{2},...,t_{s}),$ is%
\begin{equation*}
d_{T}=(t_{1}+1)^{2}+\ldots +(t_{s}+1)^{2}=(u_{1}+2)^{2}+\ldots
+u_{s-1}^{2}+u_{s}{}^{2}.
\end{equation*}%
Hence 
\begin{equation*}
d_{U}-d_{T}=2(u_{s}-u_{1})-2>0.
\end{equation*}%
Repeating this process sufficiently many times we see that the process has
to end at the exceptional bunch tree $E$ showing that its dimension is the
smallest among the dimensions of all elements of $\mathcal{E}(n,s)$.

Now let $F$ be an arbitrary tree with $n$ vertices and $s$ sinks. As above
we assign to $F$ the $s$-tuple $(n_{1},n_{2},...,n_{s})$ with $%
n_{i}=n(v_{i})-1$ where the sinks $v_{i},~i=1,2,\ldots ,s$ are indexed in
such a way that $n_{i}\leq n_{i+1},~i=1,\ldots ,s-1.$ Observe that $%
n_{1}+n_{2}+\cdots +n_{s}\geq n-1$. Let $\beta
=\dsum\limits_{i=1}^{s}n_{i}-(n-1).$ Since $s\leq n-1$, $\beta \leq
\dsum\limits_{i=1}^{s}(n_{i}-1).$ Either $n_{1}-1\geq \beta $ or there
exists a unique $k\in \left\{ 2,\ldots ,s\right\} $ such that $%
\dsum\limits_{i=1}^{k-1}(n_{i}-1)<\beta \leq \dsum\limits_{i=1}^{k}(n_{i}-1)$%
. If $n_{1}-1\geq \beta ,$ then let 
\begin{equation*}
m_{i}=\left\{ 
\begin{array}{ccc}
n_{1}-\beta & , & i=1 \\ 
n_{i} & , & i>1%
\end{array}%
\right. .
\end{equation*}%
Otherwise, let 
\begin{equation*}
m_{i}=\left\{ 
\begin{array}{ccc}
1 & , & i\leq k-1 \\ 
n_{k}-\left( \beta -\sum\limits_{i=1}^{k-1}(n_{i}-1)\right) & , & i=k \\ 
n_{i} & , & i\geq k+1%
\end{array}%
\right. .
\end{equation*}%
In both cases, the $s$-tuple $(m_{1},m_{2},\ldots ,m_{s})$ that satisfies $%
1\leq m_{i}\leq n_{i}$, \linebreak $m_{1}\leq m_{2}\leq \cdots \leq m_{s}$
and $m_{1}+m_{2}+\cdots +m_{s}=n-1$ is obtained. So, there exists a bunch
tree $M$ namely the one corresponding uniquely to $(m_{1},m_{2},\ldots
,m_{s})$ which has dimension $d_{M}\leq d_{F}.$ This implies that $d_{F}\geq
d_{E}.$

Hence the result follows.
\end{proof}

\begin{lemma}
The minimum $K$-dimension of $L_{K}(E)$ where $E$ ranges over all possible
trees with $n$ vertices occurs when the number of sinks is $n-1$ and is
equal to $4(n-1)$.
\end{lemma}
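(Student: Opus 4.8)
The plan is to combine the previous theorem, which computes the minimum $K$-dimension over all trees with $n$ vertices and a \emph{fixed} number $s$ of sinks, with a single convexity-type inequality that disposes of the minimization over $s$. By that theorem the minimum for a fixed $s$ equals $g(s)=r(q+2)^{2}+(s-r)(q+1)^{2}$ with $n-1=qs+r$, $0\le r<s$; equivalently it is $\sum_{i=1}^{s} n_i^{2}$ where the $n_i$ are the $s$ integers as equal as possible (each equal to $q+1$ or $q+2$). Since a connected tree on $n\ge 2$ vertices has at least one non-sink, $s$ ranges only over $1\le s\le n-1$, so it remains to show $\min_{1\le s\le n-1} g(s)=4(n-1)$ and that the minimum is attained only at $s=n-1$.

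First I would record the two facts I need for an arbitrary tree $E$ with $n$ vertices and sinks $v_1,\dots,v_s$: every sink has $n(v_i)\ge 2$ (no vertex is isolated, so each sink receives an edge and is therefore the range of a path of positive length), and $\sum_{i=1}^{s}\bigl(n(v_i)-1\bigr)\ge n-1$, which was already established in the proof of the previous theorem. The key elementary inequality is then $\bigl(n(v_i)-2\bigr)^{2}\ge 0$, i.e. $n(v_i)^{2}\ge 4\,n(v_i)-4$, valid for every $i$ with equality exactly when $n(v_i)=2$.

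Summing over $i$ and using $\sum_i n(v_i)=\sum_i\bigl(n(v_i)-1\bigr)+s\ge (n-1)+s$ gives
\[
\dim_K L_K(E)=\sum_{i=1}^{s} n(v_i)^{2}\ \ge\ 4\sum_{i=1}^{s} n(v_i)-4s\ \ge\ 4\bigl((n-1)+s\bigr)-4s\ =\ 4(n-1),
\]
which proves the lower bound for every tree simultaneously. For the equality analysis both inequalities must be tight: the first forces $n(v_i)=2$ for all $i$, whence $\sum_i\bigl(n(v_i)-1\bigr)=s$, and then the second forces $s=n-1$. Conversely, the star-shaped bunch tree with one source and $n-1$ sinks (the case $(t_1,\dots,t_s)=(1,\dots,1)$, so each $n(v_i)=2$) has exactly $n$ vertices and dimension $(n-1)\cdot 4=4(n-1)$, so the bound is realized precisely when the number of sinks is $n-1$.

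The computation is routine, and I do not expect a serious obstacle. The only point needing care is the inequality $\sum_{i=1}^{s}\bigl(n(v_i)-1\bigr)\ge n-1$, which is exactly what ties the abstract dimension count back to the vertex count of the tree; I would simply invoke it from the previous theorem's proof (the paths terminating at the sinks account for all $n$ vertices, each non-sink lying on such a path). Granting that, the entire statement — both the value $4(n-1)$ and the extremal graph being the star with $n-1$ sinks — falls out of the single inequality $\bigl(n(v_i)-2\bigr)^{2}\ge 0$.
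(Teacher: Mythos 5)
Your proof is correct, but it follows a genuinely different route from the paper's. The paper works from the \emph{statement} of the preceding theorem: it rewrites the fixed-$s$ minimum $r(q+2)^{2}+(s-r)(q+1)^{2}$ as $(n-1)(q+2)+qr+r+s$ and shows by a case analysis on the quotient ($q\geq 2$ versus $q=1$, using $n-1=qs+r$) that this is at least $4(n-1)$, then exhibits the star-shaped truncated tree to realize the bound. You bypass that formula entirely: from the counting fact $\sum_{i}\bigl(n(v_{i})-1\bigr)\geq n-1$ (which does appear verbatim in the proof of the preceding theorem, so invoking it is legitimate) and the pointwise inequality $n(v_{i})^{2}\geq 4n(v_{i})-4$, you obtain the lower bound for \emph{every} tree in one line, and your equality analysis simultaneously forces $n(v_{i})=2$ for all $i$ and $s=n-1$, identifying the star as the unique extremal configuration. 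Your route avoids the quotient--remainder case analysis, needs only an observation embedded in the previous proof rather than its full conclusion, and makes the uniqueness of $s=n-1$ explicit, whereas the paper gets it only implicitly (its difference equals $r$ when $q=1$ and is strictly positive when $q\geq 2$); the paper's route, in exchange, is a purely arithmetic corollary of the theorem it has just proved, with no further graph-theoretic input. Two minor points: your hypothesis that each sink has $n(v_{i})\geq 2$ is never actually used, since $x^{2}\geq 4x-4$ holds for all real $x$; and your parenthetical justification of the counting fact (each non-sink lies on a path to a sink) only yields $\sum_{i}\bigl(n(v_{i})-1\bigr)\geq n-s$ --- the clean count is that each of the $n-1$ edges of the tree is the initial edge of at least one path ending at a sink, and distinct edges give distinct paths --- but since you cite the paper's established observation rather than your sketch, this looseness is not a gap.
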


\begin{proof}
By the previous theorem we see that 
\begin{equation*}
\dim L_{K}(E)\geq r(q+2)^{2}+(s-r)(q+1)^{2}
\end{equation*}%
where $n-1=qs+r,~~0\leq r<s.$ We have 
\begin{equation*}
r(q+2)^{2}+(s-r)(q+1)^{2}=(n-1)(q+2)+qr+r+s.
\end{equation*}%
Thus 
\begin{equation*}
(n-1)(q+2)+qr+r+s-4(n-1)=(n-1)(q-2)+qr+r+s\geq 0\text{ \ }if\text{ \ }q\geq
2.
\end{equation*}%
If $q=1,$then $-(n-1)+2r+s=-(n-1)+r+(n-1)=r\geq 0.$ Hence $\dim L_{K}(E)\geq
4(n-1).$

Notice that there exists a truncated tree $E$ with $n$ vertices and
\linebreak $\dim L_{K}(E)=4(n-1)$ as sketched below : \FRAME{dhF}{1.3828in}{%
1.0136in}{0pt}{}{}{Figure}{\special{language "Scientific Word";type
"GRAPHIC";maintain-aspect-ratio TRUE;display "USEDEF";valid_file "T";width
1.3828in;height 1.0136in;depth 0pt;original-width 3.4065in;original-height
2.4898in;cropleft "0";croptop "1";cropright "1";cropbottom "0";tempfilename
'LXL3CQ02.wmf';tempfile-properties "XPR";}}
\end{proof}

\section{Line Graphs}

The \textit{total-degree} of the vertex $v$ is the number of edges that
either have $v$ as its source or as its range, that is, $tot\deg
(v)=\left\vert s^{-1}(v)\cup r^{-1}(v)\right\vert .$ A finite graph $E$ is a 
\textit{line graph} if it is connected, acyclic and $totdeg(v)\leq 2$ for
every $v\in E^{0}.$

\begin{remark}
In \cite{1}, the proposition 5.7 shows that a semisimple finite dimensional
algebra $A=\bigoplus\limits_{i=1}^{s}M_{n_{i}}(K)$ over the field $K$ can be
described as a Leavitt path algebra $L(E)$ defined \ by a line graph $E,$ if
and only if $A$ has no ideals of $K-$dimension $1$ and the number of minimal
ideals of $A$ of $K$ dimension $2^{2}$ is at most $2.$ On the other hand, if 
$A\cong L(E)$ for some $n$ line graph $E$ then $n-1=\sum%
\limits_{i=1}^{s}(n_{i}-1),$ that is, $n$ is an algebraic invariant of $A.$
\end{remark}

Therefore the following proposition answers a reasonable question.

\begin{proposition}
The number $A_{n}$ of isomorphism classes of Leavitt path algebras defined
by line graphs having exactly $n$ vertices is%
\begin{equation*}
A_{n}=P(n-1)-P(n-4)
\end{equation*}%
where $P(m)$ is the number of partitions of the natural number $m.$
\end{proposition}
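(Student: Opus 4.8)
The plan is to reduce the count to a partition problem and then evaluate it by a one-line bijection. First I would observe that an algebra $A=\bigoplus_{i=1}^{s}M_{n_{i}}(K)$ is determined up to isomorphism exactly by the multiset of block sizes $\{n_{1},\ldots ,n_{s}\}$, since $M_{a}(K)\cong M_{b}(K)$ if and only if $a=b$. By the Remark preceding the statement, such an $A$ is isomorphic to the Leavitt path algebra of a line graph on exactly $n$ vertices if and only if every $n_{i}\geq 2$, at most two of the $n_{i}$ are equal to $2$, and $n-1=\sum_{i=1}^{s}(n_{i}-1)$; the last condition shows $n$ is indeed an invariant of $A$.

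Setting $m_{i}=n_{i}-1$, these three conditions read: each $m_{i}\geq 1$, the value $1$ occurs at most twice among the $m_{i}$, and $m_{1}+\cdots +m_{s}=n-1$. An unordered tuple $\{n_{1},\ldots ,n_{s}\}$ subject to these constraints is the same datum as a partition of $n-1$ in which the part $1$ appears at most twice. Hence $A_{n}$ equals the number of such restricted partitions of $n-1$.

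To evaluate this I would count complementarily against the full total $P(n-1)$. The partitions that must be discarded are precisely those of $n-1$ in which the part $1$ appears three or more times. I would set up the evident bijection between these and all partitions of $n-4$: deleting three copies of the part $1$ sends a partition of $n-1$ with at least three $1$'s to a partition of $n-4$, and appending three $1$'s is its two-sided inverse. Thus the discarded partitions number exactly $P(n-4)$, and therefore $A_{n}=P(n-1)-P(n-4)$.

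The only point requiring care is the translation step, namely reading off the constraint ``at most two $1$'s'' from the Remark: a minimal ideal of $K$-dimension $2^{2}=4$ is a block $M_{2}(K)$, i.e.\ an $n_{i}=2$, which under $m_{i}=n_{i}-1$ becomes $m_{i}=1$. Once this dictionary is fixed, the partition count and the bijection are routine, and the formula follows.
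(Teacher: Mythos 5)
Your proposal is correct and follows essentially the same route as the paper: both arguments reduce the count to partitions of $n-1$ having at most two parts equal to $1$, and then evaluate this as $P(n-1)-P(n-4)$ by discarding (via the delete-three-$1$'s correspondence) the partitions of $n-1$ containing at least three parts equal to $1$. The only cosmetic difference is that you set up the restricted-partition description on the algebra side, using the Remark and the shift $m_{i}=n_{i}-1$ of the Wedderburn block sizes, whereas the paper phrases the same reduction on the graph side by partitioning the $n-1$ edges of a line graph according to the sink each edge is directed towards, the part sizes being $n(v)-1$; these two descriptions coincide.
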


\begin{proof}
Any $n$-line graph has $n-1$ edges. In a line graph, for any edge $e$ there
exists a unique sink $v$ so that there exists a path from $s(e)$ to $v.$ In
this case we say that $e$ is directed towards $v$. The number of edges
directed towards $v$ is clearly equal to $n(v)-1.$ Let $E$ and $F$ be two $n$%
-line graphs. $L_{K}(E)\cong L_{K}(F)$ if and only if there exists a
bijection $\phi :S(E)\rightarrow S(F)$ such that for each $v$ in $S(E),$ we
have $n(v)=n(\phi (v)).$ Therefore the number of isomorphism classes of
Leavitt path algebras determined by $n$-line graphs is the number of
partitions of $n-1$ edges in which the number of parts having exactly one
edge is at most two. Since the number of partitions of $k$ objects having at
least three parts each of which containing exactly one element is $P(k-3)$,
we get the result $A_{n}=P(n-1)-P(n-4).$
\end{proof}

\bigskip

\bigskip

\end{document}